\newcommand{\R}{\mathbb R}
\newcommand{\restr}[1]{\lower0.4ex\hbox{$\vert$}\lower0.7ex\hbox{ $\!_{#1}$ }}
\newtheorem{theorem}{Theorem}[section]
\newtheorem{lemma}[theorem]{Lemma}
\newtheorem{corollary}[theorem]{Corollary}
\newtheorem{proposition}[theorem]{Proposition}
\newtheorem{remark}[theorem]{Remark}
\numberwithin{equation}{section}
\title{A note on surjectivity of piecewise affine mappings}
\author{Manuel Radons}
\begin{document}
	\maketitle
	
	\begin{abstract}
		A standard theorem in nonsmooth analysis states that a piecewise affine function 
		$F:\mathbb R^n\rightarrow\mathbb R^n$ is surjective if it is coherently oriented 
		in that the linear parts of its selection functions all have the same nonzero determinant sign.
		In this note we prove that surjectivity already follows from coherent orientation of the selection functions which are active on the unbounded sets of a polyhedral subdivision of the domain corresponding to $F$. A side bonus of the argumentation is a short proof of the classical statement that an injective piecewise affine function is coherently oriented.
	\end{abstract}
	
	\section{Introduction}
	\label{intro}
	Throughout, we assume familiarity with basic polyhedral terminology as described, e.g., in \cite{ziegler1993polytopes}.
	
	A continuous function $F:\mathbb R^n \rightarrow \mathbb R^m$ is called \textit{piecewise affine} if there exists a \textit{finite} set of affine functions $F_i(x)= A_ix+b_i$, such that $F$ coincides with an $F_i$ for every $x\in \mathbb R^n$ \cite[p. 15ff]{scholtes2012introduction}. The $F_i$ are called \textit{selection functions}. If $F$ coincides with $F_i$ on a set $U\subset\mathbb R^n$, we say that $F_i$ is \textit{active} on $U$.
	
	Any piecewise affine function $F:\mathbb R^n \rightarrow \mathbb R^m$ admits a corresponding (nonunique) partition $\mathcal P(F)$ of $\mathbb R^n$ into \textit{nonempty} (thus $n$-dimensional) convex polyhedra such that \cite[p. 28]{scholtes2012introduction}:
	\begin{enumerate}
		\item For every polyhedron $P_k\in\mathcal P(F)$ there exists a selection function $F_i$, such that $F\restr{P_k}=F_i$.
		\item The intersection of two polyhedra $P_k, P_l\in\mathcal P(F), k\ne l,$ is either empty, or a common \textit{proper} face of $P_k$ and $P_l$. 
		\item If $P_k\cap P_l\ne\emptyset$, then the selection functions which are active on $P_k$ and $P_l$, respectively, coincide on $P_k\cap P_l$.  
	\end{enumerate}
	We denote by $\mathcal P^\circ(F)$ the set of polytopes, i.e., of compact polyhedra, in $\mathcal P(F)$, and by $\mathcal P^\cup(F)$ the unbounded polyhedra in $\mathcal P(F)$.
	
	It is well known that a piecewise affine function $F:\mathbb R^n \rightarrow \mathbb R^n$ is surjective if it is \textit{coherently oriented} in that the linear parts of its selection functions all have the same nonzero determinant sign (see, e.g., \cite[Prop. 2.3.5, p. 34]{scholtes2012introduction} and \cite[Prop. 2.3.6, p. 35]{scholtes2012introduction}). 
	Clearly, for a piecewise affine function $F:\R\rightarrow\R$ with $\vert\mathcal P^\circ(F)\vert =: p>0$ we have $\vert\mathcal P^\cup(F)\vert =2$. Then via elementary arguments it can easily be verified that $F$ is surjective if and only if both affine functions which are active on the rays in $\mathcal P^\cup(F)$ have a positive or a negative slope, respectively. That is, if they are coherently oriented. (Compare figures below.)
	
	Topologically, this means that the surjectivity of $F$ (or the lack thereof) is independent of its behavior on the polytopes in $\mathcal P^\circ(F)$, so long as $F$ is continuous. Algorithmically, it means that to check for surjectivity of $F$ we need to consider the slopes of exactly two selection functions, instead of $p$ many, where $p$ may be arbitrarily large.

	\trimbox{0.1cm -0.5cm -0.5cm -0.5cm}{ 
		\begin{tikzpicture}[scale=.8]
		
		\draw (0.4,0.4) -- (1.5,1.5);
		\draw (0,0) -- (0.1,0.1);
		\draw (0.2,0.2) -- (0.3,0.3);
		\draw (1.5,1.5) -- (2,1);
		\draw (2,1) -- (2.5,1.5);
		\draw (2.5,1.5) -- (3.0,1);
		\draw (3.2, 1) -- (3.3, 1);
		\draw (3.5, 1) -- (3.6, 1);
		\draw (3.8, 1) -- (3.9, 1);
		\draw (4.1, 1) -- (4.6, 1.5);
		\draw (4.6, 1.5) -- (5.1,1);
		\draw (6.6,2.5) -- (6.5,2.4);
		\draw (6.4,2.3) -- (6.3,2.2);
		\draw (5.1,1) -- (6.2,2.1);
		
		\end{tikzpicture} }
	\trimbox{0cm -0.5cm 0cm -0.5cm}{ 
		\begin{tikzpicture}[scale=.8]
		
		\draw (0.4,0.4) -- (1.5,1.5);
		\draw (0,0) -- (0.1,0.1);
		\draw (0.2,0.2) -- (0.3,0.3);
		\draw (1.5,1.5) -- (2,1);
		\draw (2.2, 1) -- (2.3, 1);
		\draw (2.5, 1) -- (2.6, 1);
		\draw (2.8, 1) -- (2.9, 1);
		\draw (3.1, 1) -- (3.6, 1.5);
		\draw (5.1,0) -- (5,0.1);
		\draw (4.9,0.2) -- (4.8,0.3);
		\draw (3.6, 1.5) -- (4.7,0.4);
		
		\end{tikzpicture}
	}

	In this note we prove an analogous result for arbitrary dimension $n$ (albeit, without the "only if"). 
	In Section \ref{section:degree-basics} we will assemble the necessary prerequisites from the literature. The main result is proved in Section \ref{section:main-result}. The techniques employed also yield a simple proof of the well-known statement that an injective piecewise affine function is coherently oriented (and thus surjective).

	\section{Mapping Degree Basics}\label{section:degree-basics}
	
	The following definitions and facts can be found, e.g., in \cite[p. 111ff]{ruiz2009deg}. Let $f:\mathbb R^n:\rightarrow\mathbb R^n$ be a continuous function, $\Omega\subset\mathbb R^n$ a bounded domain, and let $y\in\mathbb R^n\setminus f(\partial\bar\Omega)$, where $\bar\Omega$ is the closure of $\Omega$ and $\partial\bar\Omega$ denotes the boundary of $\bar\Omega$. We say $y$ is a \textit{regular value} of $f\restr{\Omega}$ if either $(f\restr{\Omega})^{-1}(y)=\emptyset$ or if the differential $D_xf$ of all $x\in f^{-1}(y)$ exists and is invertible. The \textit{local degree} of $y$ on $\Omega$ is denoted by $\operatorname{deg}(f,\Omega,y)$. We will need the following two of its properties:
	\begin{enumerate}
		\item $\operatorname{deg}(f, \Omega,y) = \sum_{x\in (f\restr{\Omega})^{-1}(y)}\operatorname{sign}[\det(D_xF)]$ -- which especially implies that \[(f\restr{\Omega})^{-1}(y)\ne\emptyset\quad\quad \text{ if }\quad\quad \operatorname{deg}(f, \Omega,y)\ne 0\,.\] 
		\item \textit{Nearness property:} Let $y, y'$ be regular values of $f\restr{\Omega}$. If \[\operatorname{dist}(y,y')\ <\ \operatorname{dist}(y,f(\partial\bar\Omega))\,,\] then $\operatorname{deg}(f, \Omega,y)=\operatorname{deg}(f, \Omega,y')$.
		\item The regular values of $f$ are dense in the codomain.
	\end{enumerate}
	Also note that for a piecewise affine function $F:\R^n\rightarrow\R^n$ the preimage $F^{-1}(y)$ is finite and discrete for any regular value $y$ of $F$.
	
	\section{Main result}\label{section:main-result}
	
	Let $F: \mathbb R^n\rightarrow \mathbb R^n$ be a piecewise affine function and $\mathcal P(F)$ a corresponding subdivision of $\R^n$. We say $F$ is nonsingular at infinity if all selection functions which are active on unbounded polyhedra of $\mathcal P(F)$ have a nonsingular linear part.
	
	\begin{lemma}(Globally defined degree)\label{lem:global-degree}
		Let $F: \mathbb R^n\rightarrow \mathbb R^n$ be a piecewise affine function which is nonsingular at infinity. Moreover, let $y,y'\in\R^n$ be two regular values of $F$, and $\Omega$ a bounded domain that contains their preimage. Then 
		\begin{align*}
		\operatorname{deg}(F,\Omega,y)\ =\ \operatorname{deg}(F,\Omega,y')\,,
		\end{align*}
		i.e., the mapping degree is globally defined for all regular values of $F$.
	\end{lemma}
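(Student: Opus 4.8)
The plan is to reduce the statement to one conveniently chosen domain and then to bridge $y$ and $y'$ by a finite chain of applications of the nearness property. The starting observation is that, by the degree formula (property 1), whenever $\Omega$ contains the \emph{entire} preimage $F^{-1}(y)$ of a regular value $y$, one has
\begin{align*}
\operatorname{deg}(F,\Omega,y)\ =\ \sum_{x\in F^{-1}(y)}\operatorname{sign}[\det(D_xF)]\,,
\end{align*}
a quantity that does not depend on $\Omega$. (Note also that $F^{-1}(y)\subset\Omega$ forces $y\notin F(\partial\bar\Omega)$, so the degree is defined.) Hence it suffices to establish the equality for a single admissible domain, and I would take a large ball.

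The key use of the hypothesis is to show that $F$ is proper. On each unbounded polyhedron the active selection function $F_i(x)=A_ix+b_i$ has invertible $A_i$, hence $\|F_i(x)\|\ge\sigma_{\min}(A_i)\|x\|-\|b_i\|$ with $\sigma_{\min}(A_i)>0$; since the finitely many bounded polyhedra are contained in a ball, every $x$ of sufficiently large norm lies in some $P\in\mathcal P^\cup(F)$, and taking the minimum of the $\sigma_{\min}(A_i)$ over these finitely many polyhedra yields $\|F(x)\|\to\infty$ as $\|x\|\to\infty$, so that preimages of bounded sets are bounded. Now choose a closed codomain ball $\bar B$ containing $y$ and $y'$; by properness $F^{-1}(\bar B)$ is compact, hence contained in some open domain ball $\Omega_0$. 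Then no boundary point of $\Omega_0$ maps into $\bar B$, i.e. $F(\partial\bar\Omega_0)\cap\bar B=\emptyset$, and since both sets are compact we get $\delta:=\operatorname{dist}(\bar B,F(\partial\bar\Omega_0))>0$. In particular $y$ and $y'$ lie in the single connected set $\bar B\subset\R^n\setminus F(\partial\bar\Omega_0)$.

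It remains to propagate the degree from $y$ to $y'$ inside $\bar B$. I would take the straight segment $[y,y']\subset\bar B$, subdivide it into points $p_0=y,\dots,p_N=y'$ of consecutive distance less than $\delta/4$, and, using density of regular values (property 3), replace each interior $p_k$ by a regular value $z_k$ within $\delta/8$ of $p_k$ (keeping $z_0=y$, $z_N=y'$). Each $z_k$ then satisfies $\operatorname{dist}(z_k,F(\partial\bar\Omega_0))\ge 7\delta/8$ while $\operatorname{dist}(z_k,z_{k+1})<\delta/2$, so the nearness property yields $\operatorname{deg}(F,\Omega_0,z_k)=\operatorname{deg}(F,\Omega_0,z_{k+1})$ for every $k$; chaining these equalities gives $\operatorname{deg}(F,\Omega_0,y)=\operatorname{deg}(F,\Omega_0,y')$, and by the first paragraph the same value is attained on the originally given $\Omega$.

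The main obstacle is the passage from the purely local nearness property to a global statement: because the critical values of $F$ can disconnect the regular locus, one cannot simply invoke connectedness of the set of regular values. Nonsingularity at infinity is exactly what repairs this, through properness — it furnishes a domain ball whose boundary image stays a \emph{uniform} positive distance $\delta$ from a connected compact set containing both $y$ and $y'$, which is what makes the finite nearness-chain feasible. The remaining care is bookkeeping the three distance contributions so that every consecutive pair obeys the nearness inequality.
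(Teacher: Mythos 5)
Your proposal is correct and follows essentially the same route as the paper: nonsingularity at infinity yields properness of $F$, which provides a large ball whose boundary image stays a controlled distance away from $y$ and $y'$, and the nearness property of the degree (together with the degree formula, which makes the choice of admissible $\Omega$ irrelevant) then transfers the degree between the two values. The only difference is bookkeeping: the paper chooses the radius $R$ so large that $\operatorname{dist}\left[y,F(\partial\bar B(0,R))\right]>\operatorname{dist}\left[y,y'\right]$ and applies the nearness property once, whereas you chain finitely many applications along the segment $[y,y']$, which is sound but costs an extra ingredient (density of regular values) that the paper's single-step version does not need.
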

	\begin{proof}
		Denote by $S^{n-1}_R$ the sphere of radius $R$ with respect to the Euclidean norm $\Vert\cdot\Vert$. 
		By hypothesis, the selection functions which are active on unbounded polyhedra of $\mathcal P(F)$ are affine homeomorphisms. Hence, $\Vert x\Vert\rightarrow\infty$ implies $\Vert F(x)\Vert\rightarrow\infty$.
		Conequently, for sufficiently large 
		\begin{align*}
		R\ >\ \max\left(\max_{x\in F^{-1}(y)}(\Vert x\Vert),\max_{x\in F^{-1}(y')}(\Vert x\Vert)\right)
		\end{align*}
		we can achieve 
		\begin{align*}
		\min_{x\in S^{n-1}_R}(\Vert F(x)\Vert)\ >\ 2(\Vert y\Vert+\Vert y'\Vert)\,.
		\end{align*}
		In this constellation we have 
		\begin{align*}
		\operatorname{dist}\left[y,F(\partial\bar B(0,R))\right]\ >\ \operatorname{dist}\left[y,y'\right]\,,
		\end{align*}
		where $\bar B(0,R)$ denotes the closed ball of radius $R$ centered at $0$, whose boundary is $S^{n-1}_R$. Hence, the nearness property of the mapping degree yields
		\begin{align*}
		\operatorname{deg}\left[F, B(0,R), y'\right]\ =\ \operatorname{deg}\left[F, B(0,R), y\right]\,.
		\end{align*}
	\end{proof}
	Lemma \ref{lem:global-degree} justifies to talk of \textit{the degree} of $F$, if it is nonsingular at infinity.
	\begin{corollary}\label{cor:surjective}
		Let $F: \mathbb R^n\rightarrow \mathbb R^n$ be a piecewise affine function which is nonsingular at infinity and has nonzero degree. Then $F$ is surjective.
	\end{corollary}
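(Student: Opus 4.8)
The plan is to first show that $F$ attains every regular value via the degree, and then to upgrade this to full surjectivity through a closedness argument for the image. The degree hypothesis, interpreted through Lemma \ref{lem:global-degree}, gives a single well-defined nonzero integer that controls the preimage count at every regular value simultaneously, so the real work is organizing the topological bookkeeping around it.

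First I would record that nonsingularity at infinity makes $F$ proper. Indeed, exactly as in the proof of Lemma \ref{lem:global-degree}, the selection functions active on the unbounded polyhedra of $\mathcal P(F)$ are affine homeomorphisms, so $\Vert x\Vert\to\infty$ forces $\Vert F(x)\Vert\to\infty$. Hence $F^{-1}(y)$ is bounded for every $y$, and for a regular value $y$ it is moreover finite and discrete, as noted at the end of Section \ref{section:degree-basics}. This guarantees the existence of a bounded domain $\Omega$ containing the full preimage $F^{-1}(y)$, so that the hypotheses of Lemma \ref{lem:global-degree} are met and $\operatorname{deg}(F,\Omega,y)$ coincides with \emph{the} degree of $F$.

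Next, for any regular value $y$, that common degree is nonzero by assumption, so Property 1 of the local degree yields $(F\restr{\Omega})^{-1}(y)\ne\emptyset$; since $\Omega$ was chosen to contain all of $F^{-1}(y)$, this means $y\in F(\R^n)$. Thus every regular value lies in the image, and since the regular values of $F$ are dense in the codomain (Property 3), the image $F(\R^n)$ contains a dense subset of $\R^n$.

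Finally I would close the argument by noting that $F(\R^n)$ is closed: a proper continuous map between Euclidean spaces is closed, so $F(\R^n)=F(\overline{\R^n})$ is closed, and alternatively one may observe that the image is the finite union of the polyhedra $F_i(P_k)$, each of which is closed. A closed set containing a dense subset of $\R^n$ must equal $\R^n$, giving surjectivity. The main obstacle is precisely this passage from regular values to arbitrary values: the degree only directly detects regular values, so the crux is pairing their density with the closedness of the image rather than any single estimate.
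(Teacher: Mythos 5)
Your proof is correct and follows essentially the same route as the paper: nonzero degree forces every regular value into the image, and density of regular values combined with closedness of the image yields surjectivity. The only difference is that you spell out details the paper leaves implicit, namely the properness argument guaranteeing a bounded domain $\Omega$ containing $F^{-1}(y)$, and an explicit justification that the image is closed, for which the paper simply cites the fact that piecewise affine functions are closed maps.
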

	\begin{proof}
		Lemma \ref{lem:global-degree} asserts that all regular values of $F$, which are dense in $\R^n$, are contained in its image. 
		But, since piecewise affine functions are closed \cite[p. 42]{scholtes2012introduction}, the image of $F$ is its own closure.
	\end{proof}
	\begin{corollary}\label{cor:branched-covering}
		Let $F:\R^n\rightarrow\R^n$ be a coherently oriented piecewise affine function. Then all regular values of $F$ have the same number of preimages. 
	\end{corollary}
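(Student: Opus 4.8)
The plan is to combine the first property of the mapping degree (the signed preimage count) with the coherent orientation hypothesis and Lemma \ref{lem:global-degree}. The whole argument is short because coherent orientation forces the signed count to degenerate into an unsigned one.

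First I would observe that coherent orientation immediately implies that $F$ is nonsingular at infinity. Indeed, every selection function has a linear part with the same \emph{nonzero} determinant sign, so in particular the selection functions active on the unbounded polyhedra of $\mathcal P(F)$ have nonsingular linear parts. Hence Lemma \ref{lem:global-degree} applies and the degree of $F$ is globally defined for all regular values.

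Next, fix an arbitrary regular value $y$ of $F$. As noted at the end of Section \ref{section:degree-basics}, $F^{-1}(y)$ is finite, so I may choose a bounded domain $\Omega$ containing it; by property 1 of the degree this yields
\[
\operatorname{deg}(F,\Omega,y)\ =\ \sum_{x\in F^{-1}(y)}\operatorname{sign}[\det(D_xF)]\,.
\]
For each $x$ in this preimage the differential $D_xF$ exists and is invertible (this is the definition of a regular value), and it equals the linear part of the selection function active at $x$. By coherent orientation every such determinant has one and the same sign $\sigma\in\{-1,+1\}$, independent of $x$ and of $y$. Consequently $\operatorname{deg}(F,\Omega,y)=\sigma\,\lvert F^{-1}(y)\rvert$.

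Finally, Lemma \ref{lem:global-degree} asserts that the left-hand side is the same for every regular value of $F$. Since $\sigma$ is a fixed nonzero constant, $\lvert F^{-1}(y)\rvert$ must likewise be the same for all regular values, which is the assertion. I do not expect a serious obstacle; the only point deserving a word of care is that at a regular value the preimage points are genuine differentiability points of $F$, so that the signed sum really collapses to counting with the single sign $\sigma$ — but this is precisely the definition of a regular value together with the coherent orientation hypothesis.
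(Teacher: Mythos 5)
Your proposal is correct and follows exactly the paper's route: coherent orientation gives nonsingularity at infinity, Lemma \ref{lem:global-degree} makes the degree globally defined, and the signed preimage sum collapses to $\sigma\,\lvert F^{-1}(y)\rvert$ with a fixed sign $\sigma$, forcing the preimage count to be constant. The paper's own proof is just a terser statement of this same argument, so your version simply supplies the details it leaves implicit.
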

	\begin{proof}
		$F$ is nonsingular at infinity, thus its degree is globally defined. Then the statement follows from the fact that all selection functions have the same nonzero determinant sign. 
	\end{proof}
	\begin{remark}
		Corollary \ref{cor:branched-covering} especially implies that a coherently oriented piecewise affine function is a branched covering.
	\end{remark}
	\begin{lemma}\label{lem:reg-val}
		Let $F: \mathbb R^n\rightarrow \mathbb R^n$ be a piecewise affine function which is nonsingular at infinity, and $\mathcal P(F)$ some corresponding subdivision of $\R^n$. Then the image of $F$ contains a regular value whose preimage lies exclusively in the unbounded polyhedra of $\mathcal P(F)$.
	\end{lemma}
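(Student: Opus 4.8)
The plan is to play the boundedness of $F$ on the compact cells against its coercive behaviour on the unbounded ones. First I would gather all polytopes into the single compact set $K := \bigcup_{P\in\mathcal P^\circ(F)} P$. Since $K$ is a finite union of compact sets and $F$ is continuous, $F(K)$ is bounded, say $F(K)\subseteq\bar B(0,M)$ for some $M>0$. The point of this reduction is that any $y\notin F(K)$ automatically satisfies $F^{-1}(y)\cap K=\emptyset$: a preimage point in $K$ would force $y\in F(K)$. Because the cells of $\mathcal P(F)$ cover $\R^n$, such a preimage must then sit in some unbounded cell. Hence the entire task collapses to exhibiting a \emph{regular} value of $F$ that lies in the image but outside the bounded set $F(K)$.

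Next I would produce an open region of the image that reaches beyond $\bar B(0,M)$. As already recorded in the proof of Lemma \ref{lem:global-degree}, nonsingularity at infinity makes $\Vert x\Vert\to\infty$ imply $\Vert F(x)\Vert\to\infty$, so $F$ maps the unbounded part of the domain to an unbounded set. Since $\mathcal P^\cup(F)$ is finite, at least one unbounded cell $P\in\mathcal P^\cup(F)$ already has unbounded image. On such a $P$ the function $F$ coincides with a selection function $F_i$ whose linear part is invertible, so $F_i$ is an affine homeomorphism of $\R^n$; as $P$ is $n$-dimensional with nonempty (indeed unbounded) interior, $F_i(\operatorname{int} P)$ is an open, unbounded subset of $F(\R^n)$. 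I can therefore choose $\hat y\in F_i(\operatorname{int} P)$ with $\Vert\hat y\Vert>M$ and an open ball $W\subseteq F_i(\operatorname{int} P)$ about $\hat y$ that stays clear of $\bar B(0,M)$.

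Finally I would invoke density of regular values. The ball $W$ lies in the image of $F$ and is disjoint from $F(K)\subseteq\bar B(0,M)$, and the regular values of $F$ are dense in the codomain, so $W$ contains a regular value $y$. By construction $y\in F(\R^n)$, the value $y$ is regular, and $y\notin F(K)$ gives $F^{-1}(y)\cap K=\emptyset$; thus every preimage point of $y$ lies in an unbounded polyhedron of $\mathcal P(F)$, which is exactly the assertion.

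The only genuinely delicate step should be the second one: one must combine the boundedness of $F(K)$ with coercivity at infinity to be sure the image truly overshoots $F(K)$, and then verify that this overshoot occurs on a full-dimensional, hence open, piece of the image, so that the density of regular values can be applied inside it. The first and third steps are essentially bookkeeping.
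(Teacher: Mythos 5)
Your proof is correct and follows essentially the same route as the paper's: bound the image of the compact union of bounded cells, use nonsingularity at infinity to exhibit an open subset of the image lying beyond that bound, and apply density of regular values there. The only cosmetic difference is that the paper picks a far-out ball in the domain (inside the interior of an unbounded cell) and pushes it forward, whereas you take the open image $F_i(\operatorname{int} P)$ of an unbounded cell and choose the ball directly in the codomain.
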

	\begin{proof}
		Since $F$ is closed, the image of the compact set $\mathcal P^\circ(F)$ is compact, and 
		\begin{align*}
		r\ :=\ \max_{y\in F(\mathcal P^\circ(F))}(\Vert y\Vert)\, 
		\end{align*}
		is well defined.
		As $F$ is nonsingular at infinity, there exists some closed ball $B(x,\varepsilon)$ in the interior of some unbounded polyhedron of $\mathcal P(F)$ such that $\min_{x\in B(x,\varepsilon)}(\Vert F(x)\Vert)>r$. Then the interior of $F(B(x,\varepsilon))$ contains a regular value, whose preimage lies exclusively in unbounded polyhedra of $\mathcal{P}(F)$.
	\end{proof}
	
	\begin{proposition}\label{Main}
		Let $F: \mathbb R^n\rightarrow \mathbb R^n$ be a piecewise affine function and $\mathcal P(F)$ a corresponding subdivision of $\R^n$. Then $F$ is surjective if the linear parts of all selection functions which are active on unbounded polyhedra in $\mathcal P(F)$ have the same nonzero determinant sign. 
	\end{proposition}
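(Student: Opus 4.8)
The plan is to reduce the claim to Corollary \ref{cor:surjective} by producing a single regular value at which the degree is nonzero. First observe that the hypothesis—all selection functions active on unbounded polyhedra share a common nonzero determinant sign—forces each such selection function to have nonsingular linear part. Hence $F$ is nonsingular at infinity, so Lemma \ref{lem:global-degree} applies and the degree of $F$ is globally defined across all regular values.

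The crucial step is to invoke Lemma \ref{lem:reg-val} to obtain a regular value $y$ in the image of $F$ whose preimage $F^{-1}(y)$ lies \emph{exclusively} in the unbounded polyhedra of $\mathcal P(F)$. This is precisely what makes the weakened hypothesis usable: it confines every preimage point to the region where coherent orientation is assumed. Since $y$ is a regular value, $F^{-1}(y)$ is finite and discrete, so we may fix a bounded domain $\Omega$ (e.g.\ a sufficiently large open ball) containing it.

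Next I would compute the degree at $y$ via the summation formula. Each $x\in F^{-1}(y)$ lies in the interior of an unbounded polyhedron, so $D_xF$ is the linear part of a selection function active on an unbounded polyhedron. By hypothesis all these parts have the same nonzero determinant sign $\sigma\in\{-1,+1\}$, whence
\begin{align*}
\operatorname{deg}(F,\Omega,y)\ =\ \sum_{x\in F^{-1}(y)}\operatorname{sign}[\det(D_xF)]\ =\ \sigma\cdot|F^{-1}(y)|\,.
\end{align*}
Because Lemma \ref{lem:reg-val} guarantees $y$ is in the image of $F$, the preimage is nonempty, so $|F^{-1}(y)|\ge 1$ and the degree does not vanish.

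Finally, Lemma \ref{lem:global-degree} identifies this nonzero value with the globally defined degree of $F$, so $F$ is nonsingular at infinity and has nonzero degree; Corollary \ref{cor:surjective} then delivers surjectivity. The only genuine obstacle is securing the specially located regular value, and that is exactly the content of Lemma \ref{lem:reg-val}; everything afterward is a direct degree computation whose sign cancellations are ruled out by the common orientation on the unbounded cells.
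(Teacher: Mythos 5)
Your proposal is correct and follows essentially the same route as the paper's own proof: establish nonsingularity at infinity, use Lemma \ref{lem:reg-val} to obtain a regular value whose preimage lies only in unbounded polyhedra, observe that the degree sum $\sum_{x\in F^{-1}(y)}\operatorname{sign}[\det(D_xF)]$ cannot vanish under the coherent-orientation hypothesis, and conclude via Lemma \ref{lem:global-degree} and Corollary \ref{cor:surjective}. Your version is in fact slightly more explicit than the paper's, since you spell out that the hypothesis implies nonsingularity at infinity and that nonemptiness of the preimage gives $|F^{-1}(y)|\ge 1$.
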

	\begin{proof}
		Lemma \ref{lem:reg-val} asserts that the image of $F$ contains a regular value $y$ whose preimage lies exclusively in unbounded polyhedra of $\mathcal{P}(F)$. Then \[\sum_{x\in F^{-1}(y)}\operatorname{sign}[\operatorname{det}(D_xF)]\] 
		cannot be zero since the determinant signs of the differentials $D_xF$ are either all positive or all negative, which means $y$ has nonzero degree. The claim now follows from Lemma \ref{lem:global-degree} and Corollary \ref{cor:surjective}.
	\end{proof}    
	\begin{remark}
		Note that the statement of Proposition \ref{Main} still holds if we merely assume continuity on $\mathcal P^\circ(F)$, but not necessarily affinity. 
	\end{remark}
	As a side-bonus of the argumentation so far, we obtain a short proof for another classical statement about piecewise affine functions:
	\begin{proposition}\label{prop:inj-coh}
		An injective piecewise affine function $F:\R^n\rightarrow\R^n$ is coherently oriented.
	\end{proposition}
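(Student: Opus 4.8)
The plan is to combine two observations: that injectivity forces every selection function to have nonsingular linear part, so that Lemma \ref{lem:global-degree} applies and $F$ has a globally defined degree; and that injectivity then pins this degree to the determinant sign of each individual selection function. It suffices to treat the selection functions active on the (full-dimensional) polyhedra of $\mathcal P(F)$, since these are exactly the linear parts whose signs define coherent orientation.

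First I would rule out singular linear parts. Suppose some selection function $F_i(x)=A_ix+b_i$ had $\det A_i=0$, and pick a nonzero $v\in\ker A_i$. Since $F_i$ is active on some $n$-dimensional polyhedron $P_k\in\mathcal P(F)$, whose interior is nonempty, I can choose a point $x$ and a small $t>0$ with both $x$ and $x+tv$ in the interior of $P_k$. Then $F(x)=F_i(x)=F_i(x+tv)=F(x+tv)$ with $x\ne x+tv$, contradicting injectivity. Hence every $A_i$ is nonsingular; in particular $F$ is nonsingular at infinity, so by Lemma \ref{lem:global-degree} its degree is globally defined. Let $d$ denote its common value.

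Next I would show that $\operatorname{sign}[\det A_i]=d$ for each selection function. Fix $F_i$, active on an $n$-dimensional polyhedron $P_k$, and choose any point $x_0$ in the interior of $P_k$; set $y_0:=F(x_0)$. Because $F$ is injective, $y_0$ has the unique preimage $x_0$, and since $x_0$ lies in the interior of $P_k$ the differential $D_{x_0}F=A_i$ exists and is invertible. Thus $y_0$ is a regular value of $F$, and by property 1 of the local degree its degree is the single term $\operatorname{sign}[\det(D_{x_0}F)]=\operatorname{sign}[\det A_i]$. But this degree must equal the global value $d$. As $F_i$ was arbitrary and $d\ne 0$, all selection functions share one nonzero determinant sign, i.e., $F$ is coherently oriented.

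I expect the only delicate point to be the first step, ruling out singular linear parts, but the translation-along-a-kernel-vector argument dispatches it cleanly, using only that each polyhedron of the subdivision is full-dimensional. Everything afterward is a direct application of the globally defined degree from Lemma \ref{lem:global-degree} together with the single-preimage consequence of injectivity; notably, no appeal to density of regular values is needed, since once the linear parts are nonsingular the image of every interior point is automatically a regular value.
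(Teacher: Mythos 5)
Your proof is correct and follows essentially the same degree-theoretic route as the paper: injectivity forces nonsingular linear parts (hence nonsingularity at infinity and a globally defined degree via Lemma \ref{lem:global-degree}), and the single-preimage property of regular values pins each determinant sign to that common degree. The only cosmetic difference is that you construct regular values directly from images of interior points rather than invoking Lemma \ref{lem:reg-val} and density of regular values, which slightly streamlines the paper's version and makes explicit the kernel-vector argument the paper leaves implicit.
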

	\begin{proof}
		Sice each selection function is active on some full dimensional polyhedron, the injectivity of $F$ implies that all its selection functions have an invertible linear part, which especially means that $F$ is nonsingular at infinity. Thus its degree is globally defined (Lemma \ref{lem:global-degree}).  Moreover, we can find some regular value in the image (e.g. due to Lemma \ref{lem:reg-val}), which then has degree either $+1$ or $-1$, since it has only one preimage. Hence, the (global) degree of $F$ is either $+1$ or $-1$, respectively, which yields the surjectivity of $F$. The injectivity of $F$ asserts that the linear parts of all selection function have the same determinant sign.
	\end{proof}
	\begin{corollary}
		Let $F:\R^n\rightarrow\R^n$ be a piecewise affine function. Then the following are equivalent.
		\begin{enumerate}
			\item $F$ is coherently oriented of degree $1$.
			\item $F$ is bijective.
			\item $F$ is a homeomorphism.
		\end{enumerate}
	\end{corollary}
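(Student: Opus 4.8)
The plan is to establish the cycle of implications $(1)\Rightarrow(2)\Rightarrow(3)\Rightarrow(1)$, in which the only genuinely substantial step is the injectivity half of $(1)\Rightarrow(2)$. Throughout I read the degree hypothesis in $(1)$ as $\lvert\operatorname{deg} F\rvert=1$, since the common determinant sign $s$ of the selection functions may a priori be negative: the affine homeomorphism $x\mapsto -x$ on $\R^n$ with $n$ odd is coherently oriented and bijective, yet has degree $-1$, which shows that the sign cannot be normalised away and that $(1)$ should be understood as $\operatorname{deg} F\in\{+1,-1\}$.

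Two of the three implications are almost immediate from the material already assembled. For $(2)\Rightarrow(3)$ I would not even invoke the piecewise affine structure: a continuous injection $\R^n\to\R^n$ is open by invariance of domain, so a continuous bijection is automatically a homeomorphism. For $(3)\Rightarrow(1)$, a homeomorphism is in particular injective, hence coherently oriented by Proposition~\ref{prop:inj-coh}; being coherently oriented it is nonsingular at infinity, so its degree is globally defined by Lemma~\ref{lem:global-degree}; and since $F$ is bijective every regular value has exactly one preimage, whence the sum formula yields $\operatorname{deg} F=s\in\{+1,-1\}$, i.e.\ $\lvert\operatorname{deg} F\rvert=1$.

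The real work is $(1)\Rightarrow(2)$. Surjectivity comes for free: a coherently oriented map is nonsingular at infinity and has degree $\pm1\neq0$, so Corollary~\ref{cor:surjective} applies. For injectivity I would argue by contradiction, using that by Corollary~\ref{cor:branched-covering} every regular value has exactly $\lvert\operatorname{deg} F\rvert=1$ preimages. Suppose $F(a)=F(b)=c$ with $a\neq b$. The key local fact is that near any point $p$ one has $F(p+v)=F(p)+G_p(v)$ for small $v$, where $G_p$ is the homogeneous germ (its cells are the tangent cones of the cells of $\mathcal P(F)$ at $p$, its selection functions the corresponding linear parts $A_k$). Thus $G_p$ is positively homogeneous, coherently oriented with the same sign $s$, and \emph{all} of its cells are cones, hence unbounded. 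Proposition~\ref{Main} therefore applies to $G_p$ and shows it is surjective; since its linear parts are invertible and share the sign $s$, a regular value has at least one preimage and the sum formula gives $\operatorname{deg} G_p=s\,N_p$ with $N_p\ge1$. By homogeneity the preimages of a sufficiently small regular value shrink into any fixed ball about the origin, so the \emph{local} degree of $F$ at $p$ equals $\operatorname{deg} G_p\neq0$. With a nonzero local degree in hand, semi-openness follows from Section~\ref{section:degree-basics}: choosing $\rho$ so small that $c\notin F(\partial\bar B(p,\rho))$, the nearness property makes $\operatorname{deg}(F,B(p,\rho),\cdot)$ nonzero on a whole ball $B(c,\eta)$, so every regular value there has a preimage in $B(p,\rho)$; since regular values are dense and $F(\bar B(p,\rho))$ is compact, hence closed, we get $F(\bar B(p,\rho))\supseteq B(c,\eta)$. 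Applying this at $p=a$ and $p=b$ with disjoint closed balls produces, for any regular value close enough to $c$, a preimage in each ball, hence at least two preimages, contradicting that $\lvert\operatorname{deg} F\rvert=1$ forces exactly one. Thus $F$ is injective, and with surjectivity this gives $(2)$.

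I expect the injectivity step to be the main obstacle, specifically the passage from the degree-theoretic statement $\lvert\operatorname{deg} F\rvert=1$ to genuine one-to-one behaviour at the non-regular (fold) points: the degree only controls generic fibres, and the content is precisely that coherent orientation forbids the cancellation of sheets. The device that overcomes this is the homogeneous germ $G_p$, whose local surjectivity via Proposition~\ref{Main} forces each local degree to be nonzero; everything else is bookkeeping with the density, nearness, and closedness properties already recorded.
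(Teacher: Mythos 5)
Your proof is correct, and it differs from the paper's in two substantive ways. First, the paper disposes of the equivalence of items 1 and 2 by declaring it a direct consequence of Proposition~\ref{prop:inj-coh}; that proposition really only yields $2\Rightarrow 1$ (injectivity forces coherent orientation, and one preimage per regular value forces degree $\pm 1$), whereas the converse direction --- injectivity of a coherently oriented map of degree $\pm 1$, including at non-regular values --- is left implicit there. Your homogeneous-germ argument supplies exactly this missing step: writing $F(p+v)=F(p)+G_p(v)$ locally, noting that every cell of $G_p$ is an unbounded cone so that Proposition~\ref{Main} and the sum formula give $\operatorname{deg}(G_p)=sN_p\neq 0$, and then exhibiting, for a regular value near a purported double point, preimages in two disjoint balls, contradicting Corollary~\ref{cor:branched-covering}. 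This is in effect a localization of the paper's own global technique, and it is an honest proof of a claim the paper glosses over. Second, for $2\Rightarrow 3$ the paper does not use invariance of domain: it proves openness of a bijective (hence coherently oriented) piecewise affine map by observing that coherent orientation of all directional derivatives is equivalent to metric regularity, citing Fusek's theorem. Your route through Brouwer's invariance of domain is shorter and needs no piecewise affine structure at all; the paper's route yields the stronger fact that \emph{every} coherently oriented piecewise affine map is open, which could alternatively be used to close the $1\Rightarrow 2$ gap above. Finally, your reading of ``degree $1$'' as $\vert\operatorname{deg}F\vert=1$ is the right one --- the map $x\mapsto -x$ in odd dimension shows the literal reading would make the statement false --- and it matches what the proof of Proposition~\ref{prop:inj-coh} actually produces. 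One very minor caveat: the paper's nearness property is stated for pairs of regular values, so you cannot apply it with the possibly non-regular value $c$ as the anchor; but your homogeneity argument already gives $\operatorname{deg}(F,B(p,\rho),y)=\operatorname{deg}(G_p)$ directly for all regular $y$ close to $c$, so that detour is unnecessary rather than wrong.
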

	\begin{proof}
		The equivalence of $1.$ and $2.$ is a direct consequnce of Proposition \ref{prop:inj-coh}. The implication $3.\Rightarrow 2.$ is clear. To prove $2.\Rightarrow 3.$, it suffices to show that a bijective (and thus coherently oriented) piecewise affine function is open. For a piecewise affine function $F$, coherent orientation implies the coherent orientation of the directional derivative of $F$ at all points in the domain. But this is equivalent to the metric regularity of $F$ \cite[Thm.214]{fusek2013open}, which is well known to imply openness.   
	\end{proof}

	

\begin{thebibliography}{}
		%
		%
		\bibitem {fusek2013open} P. Fusek, \textit{On metric regularity for weakly almost piecewise smooth functions and some applications in nonlinear semidefinite programming}, SIAM J. Optim, Vol. 23, No. 2, pp. 1041--1061, (2013).
		
		\bibitem {ruiz2009deg} E. Outerelo, J.M. Ruiz, \textit{Mapping Degree Theory}, Amer. Math. Soc., (2009).
		
		\bibitem {scholtes2012introduction} S. Scholtes, \textit{Introduction to Piecewise Differentiable Equations},
		Springer, (2012).
		
		\bibitem {ziegler1993polytopes} G.M. Ziegler, \textit{Lectures on Polytopes},
		Springer, (1993).
	\end{thebibliography}
	

\end{document}